\newcommand{\R}{\mathbb{R}}
\DeclareMathOperator \conv {conv} 
\DeclareMathOperator \TSP {TSP} 
\DeclareMathOperator \PYR {PYR}
\begin{document}

\theoremstyle{plain}
\newtheorem{theorem}{Theorem}
\newtheorem{lemma}{Lemma}
\newtheorem{conjecture}{Conjecture}

\title{On the skeleton of the pyramidal tours polytope}

\author{Vladimir~A.~Bondarenko, Andrei~V.~Nikolaev}
\thanks {The research was supported by the initiative R\&D VIP-004 YSU AAAA-A16-116070610022-6.}

\address{%
Department of Discrete Analysis, P.G. Demidov Yaroslavl State University, Sovetskaya, 14, Yaroslavl, 150000, Russia
}
\email {bond@bond.edu.yar.ru, andrei.v.nikolaev@gmail.com}

\begin{abstract}
We consider the skeleton of the pyramidal tours polytope. 
Hamiltonian tour is called pyramidal if the salesperson starts in city $1$, then visits some cities in increasing order, reaches city $n$ and returns to city $1$, visiting the remaining cities in decreasing order. 
The polytope $\PYR(n)$ is defined as the convex hull of characteristic vectors of all pyramidal tours in the complete graph $K_{n}$.
The skeleton of the polytope $\PYR(n)$ is the graph whose vertex set is the vertex set of $\PYR(n)$ and edge set is the set of geometric edges or one-dimensional faces of $\PYR(n)$.
We describe the necessary and sufficient condition for the adjacency of vertices of the polytope $\PYR(n)$. 
On this basis we developed an algorithm to check the vertex adjacency with a linear complexity.
We establish that the diameter of $\PYR(n)$ skeleton equals 2, and the asymptotically exact estimate of $\PYR(n)$ skeleton's clique number is $\Theta(n^{2})$. 
It is known that this value characterizes the time complexity in a broad class of algorithms based on linear comparisons.
\end {abstract}

\keywords{pyramidal tour, 1-skeleton, necessary and sufficient condition of adjacency, clique number, graph diameter.}

\maketitle


\section*{Introduction}
\indent

We consider the classic instance of the symmetric traveling salesperson problem: for a given complete weighted undirected graph $K_{n}$, find a Hamiltonian cycle with a minimum weight.
We denote by $E$ the set of edges of the complete graph $K_{n}$, and by $HC_{n}$ the set of all Hamiltonian cycles in $K_{n}$.
With each Hamiltonian cycle $x \in HC_{n}$ we associate a characteristic vector $x^{v} \in \R^{E}$ by the following rule:

\begin{gather*}
x^{v}_{e} =
\begin{cases}
1, & \mbox {if an edge } e \mbox { is contained in the cycle } x,\\
0, & \mbox {otherwise.}
\end{cases}
\end{gather*}

The polytope
$$\TSP(n) = \conv \{x^{v}\ |\ x \in HC_{n}\}$$
is called the symmetric traveling salesperson polytope.

A partial description of the traveling salesperson polytope is used in algorithms based on integer linear programming methods through which the main record exact results were obtained for large traveling salesperson problems.
Including the classic result of Danzig, Falkerson, and Johnson for 49 US cities \cite{Dantzig} and the best route for the largest pla85900 instance on $85,900$ cities from the TSPLIB library, emerging in the VLSI design and formulated by Johnson during his work at AT\&T labs \cite{Applegate}.

In this paper we consider a skeleton of a polytope, also known as $1$-skeleton. 
The skeleton of a polytope $P$ is the graph whose vertex set is the vertex set of $P$ (characteristic vectors $x^{v}$ for the traveling salesperson problem) and edge set is the set of geometric edges or one-dimensional faces of $P$. 
A significant number of works are devoted to the study of $\TSP(n)$ skeleton. 
This is due both to the applied importance of the problem and to the complexity of the associated polytope. 
In particular, the classic result of Papadimitriou says that even construction of $\TSP(n)$ skeleton is a hard problem.

\begin{theorem} [Papadimitriou \cite {Papadimitriou}] \label {TSP_adjacency_NPC}
	The question whether two vertices of $\TSP(n)$ are nonadjacent is NP-complete.
\end{theorem}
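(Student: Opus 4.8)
The plan is to establish both membership in NP and NP-hardness, keyed to the standard characterization of edges of a $0/1$-polytope. First I would record the basic criterion: for $P = \conv(V)$ with $V \subseteq \{0,1\}^{E}$, two vertices $u, v \in V$ are nonadjacent if and only if some point of the open segment $(u,v)$ lies in $\conv(V \setminus \{u,v\})$; equivalently, the midpoint $(u+v)/2$ admits a representation $\sum_{i} \lambda_i w_i$ with tours $w_i \neq u, v$ and $\lambda_i > 0$. This immediately gives membership in NP: a certificate of nonadjacency of $x$ and $y$ is a list of tours $w_1, \dots, w_k$ together with rational weights $\lambda_i$. By Carathéodory's theorem we may take $k \le |E| + 1 = O(n^2)$, each $w_i$ is described by its $n$ edges, the weights have polynomial bit-size, and verifying that $\sum_i \lambda_i w_i^{v}$ lands on the open segment between $x^{v}$ and $y^{v}$ while every $w_i \notin \{x, y\}$ is a polynomial-time linear-algebra check.

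For NP-hardness I would reduce from a suitable NP-complete problem --- a restricted Hamiltonian-cycle or exact-cover type problem, or ultimately $3$-SAT --- by engineering an instance in which the \emph{only} way to realize the midpoint of $(x^{v}, y^{v})$ as a convex combination of other tours is to encode a solution of the source instance. The combinatorial lever is that the edge multiset $x^{v} + y^{v}$ forms a $4$-regular multigraph on the cities, so any competing representation amounts to an alternative re-routing of the salesperson through this union. The construction proceeds by introducing gadget cities whose local edge patterns in $x^{v} + y^{v}$ admit exactly two admissible reroutings, thereby encoding Boolean variables, together with clause and consistency gadgets linking these choices. The goal is a pair of tours $x, y$ such that a competing tour family reproducing $x^{v} + y^{v}$ exists precisely when the gadget choices can be made consistently, that is, exactly when the source instance is a yes-instance.

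The main obstacle, and the delicate part of the argument, is the \emph{soundness} (``only if'') direction: one must show that when the source instance has no solution, $x$ and $y$ are genuinely adjacent --- not merely that no alternative decomposition into two tours exists, but that no convex combination of \emph{any} collection of other tours reaches the segment $(x^{v}, y^{v})$. The cleanest way to certify this is to exhibit an explicit weight vector $c \in \R^{E}$ for which $x$ and $y$ are the unique pair of minimum-weight tours, so that $[x^{v}, y^{v}]$ is the edge of $\TSP(n)$ minimized by $c$; alternatively, one argues directly from the midpoint criterion that every tour appearing with positive weight in a putative representation must respect all gadget constraints, forcing a solution and contradicting the hypothesis. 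Designing the gadgets so that this global rigidity holds --- ruling out ``parasitic'' reroutings that combine gadgets in unintended ways --- is where the bulk of the technical work lies.
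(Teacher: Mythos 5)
This theorem is quoted in the paper without proof --- it is Papadimitriou's 1978 result, cited as background to contrast with the linear-time adjacency test for $\PYR(n)$ --- so the comparison target is the original argument, not anything in this paper. Your NP-membership half is essentially complete and matches the standard argument: nonadjacency of $0/1$-vertices is witnessed by a convex representation of the midpoint by other vertices, Carath\'eodory bounds the support by $|E|+1$, and polynomial bit-size of the weights follows from taking a basic feasible solution of the corresponding linear system. That part I would accept.

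The hardness half, however, is a program rather than a proof, and the missing content is precisely the theorem. You have not fixed the source problem, not constructed any gadget, and the two claims on which everything rests --- that a solution of the source instance yields a representation of the midpoint, and (soundness) that nonadjacency forces a solution --- are named but not argued. Note the asymmetry you are glossing over: from nonadjacency one only extracts that every tour $w_i$ appearing with positive weight lies inside the multigraph $x \cup y$ and uses \emph{every} common edge of $x$ and $y$ (edges of midpoint-value $1$); one does \emph{not} get a complementary pair $z, w$ with $z^{v} + w^{v} = x^{v} + y^{v}$, and indeed the complement of a tour $z \subseteq x \cup y$ need not be a tour in the general TSP setting (the paper's Fig.~2 exhibits exactly this failure, which is why Lemma~1 is special to pyramidal tours). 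So the forward direction of your reduction must produce a genuine complementary pair (or an explicit convex combination), while the soundness direction must rule out arbitrary multi-tour combinations --- designing gadgets rigid enough for both simultaneously is the entire substance of Papadimitriou's construction (a reduction from a Hamiltonicity-type problem). Your fallback for soundness --- exhibiting a weight vector $c$ for which $x$ and $y$ are the unique minimum-weight tours --- is not a shortcut: producing such a $c$ for every no-instance, and verifying uniqueness against exponentially many competing tours, is as hard as the soundness argument itself. As it stands, the proposal correctly identifies where the difficulty lives but does not resolve it, so the NP-hardness direction has a genuine gap.
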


Despite this fact, some properties of the skeleton of the traveling salesperson polytope have been established. 
In particular, the following two characteristics were studied: the diameter of a graph $G$, denoted by $d(G)$, is the maximum edge distance between any pair of vertices, and the clique number of a graph $G$, denoted by $\omega(G)$, is the number of vertices in a maximum clique of $G$. 

The study of $1$-skeleton's diameter is motivated by its relationship to edge-following algorithms of linear programming such as the simplex method (the diameter serves as the lower bound on the number of non-degenerate steps) and the well-known Hirsch conjecture.
Gr\"{o}tchel and Padberg, based on a complete description of the small dimension traveling salesperson polytopes ($n \leq 9$) and the fact that, for an asymmetric problem, the diameter of $1$-skeleton equals $2$ \cite{Padberg}, made the following assumption.

\begin{conjecture} [Gr\"{o}tschel, Padberg \cite {Grotschel}] \label {conjecture_TSP_diameter}
$$\forall n\geq 5: \ d(\TSP(n)) = 2.$$
\end{conjecture}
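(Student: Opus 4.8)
The plan is to split the claimed equality $d(\TSP(n)) = 2$ into the two bounds $d(\TSP(n)) \geq 2$ and $d(\TSP(n)) \leq 2$. The lower bound is the routine direction: for every $n \geq 5$ one can exhibit two non-adjacent Hamiltonian cycles (for instance a pair whose common edges leave a symmetric difference that recombines into a genuinely different pair of tours), and the mere existence of a single non-adjacent pair forces $d(\TSP(n)) \geq 2$. All of the difficulty is concentrated in the upper bound $d(\TSP(n)) \leq 2$, which asks that any two non-adjacent tours be joined through a single intermediate vertex.

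For the upper bound I would fix an arbitrary non-adjacent pair $T_1, T_2$ and attempt to construct a third tour $T_3$ that is simultaneously adjacent to $T_1$ and to $T_2$. The object to analyze is the edge-set symmetric difference $T_1 \triangle T_2$, which decomposes into edge-disjoint closed alternating walks. Here I would use the standard reformulation of TSP adjacency: $T_1$ and $T_2$ are non-adjacent precisely when the $2$-factor $T_1 \cup T_2$ admits a decomposition into two tours other than $\{T_1, T_2\}$. This recombination freedom is exactly what should let me steer the choice of $T_3$, and the goal is to pick $T_3$ so that each of $T_1 \triangle T_3$ and $T_3 \triangle T_2$ is a single alternating cycle, since a single-cycle symmetric difference is a known sufficient condition for adjacency (a lone cyclic exchange cannot be undercut by a competing representation of the midpoint of the segment).

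The key technical step is then a construction realizing two such simple symmetric differences at once. Concretely, I would try to route $T_3$ through a fixed pivot configuration at city $n$ so that both exchanges $T_1 \to T_3$ and $T_3 \to T_2$ collapse to controlled single-cycle moves, in the spirit of the argument that already yields diameter $2$ in the asymmetric case \cite{Padberg}; the asymmetric setting is a useful template because its $2$-factor structure is rigid enough for the intermediate permutation to be written down explicitly.

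The main obstacle --- and the reason this statement is posed as a conjecture rather than proved outright --- is Papadimitriou's Theorem~\ref{TSP_adjacency_NPC}: recognizing non-adjacency in $\TSP(n)$ is NP-complete, so no clean, efficiently verifiable combinatorial characterization of adjacency is available to feed into the construction of $T_3$. The single-alternating-cycle condition is sufficient but very far from necessary, and controlling it uniformly over \emph{every} non-adjacent pair and \emph{every} $n \geq 5$ is precisely where a general proof breaks down. A realistic intermediate program would verify the bound computationally beyond the $n \leq 9$ enumeration that motivated the conjecture and establish $d(\TSP(n)) \leq 2$ for structured families of pairs, while the fully uniform statement remains open.
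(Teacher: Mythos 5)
This statement is a conjecture, not a theorem of the paper: the paper explicitly states that it remains open, cites the best known upper bound of $4$ for $d(\TSP(n))$ due to Rispoli and Cosares \cite{Rispoli}, and proves the diameter-$2$ property only for the pyramidal analogue $\PYR(n)$ (Theorem~\ref{diameter_theorem}), where the argument is entirely different from yours --- there, the two tours encoded by $\left\langle 1,1,\ldots,1 \right\rangle$ and $\left\langle 0,0,\ldots,0 \right\rangle$ are shown, via the explicit adjacency criterion of Theorem~\ref{theorem_adjacency}, to be adjacent to \emph{every} vertex, so any two vertices are joined through one of these universal neighbors. No such universal-neighbor argument is known for $\TSP(n)$, and there is consequently no ``paper's own proof'' for your proposal to match.

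Your proposal, as you yourself concede in its last paragraph, is not a proof but a program, and the gap is exactly at its load-bearing step: you never establish that for an arbitrary non-adjacent pair $T_1, T_2$ there exists a tour $T_3$ with both $T_1 \triangle T_3$ and $T_3 \triangle T_2$ being single alternating cycles, nor do you give a candidate construction whose correctness could even be checked. Two further points deserve care. First, the single-alternating-cycle sufficient condition for adjacency requires the cycle to be vertex-simple; the symmetric difference of two tours decomposes in general into closed alternating walks that may revisit vertices (degree $4$ in $T_1 \triangle T_2$), and at such vertices the union $2$-factor can be re-decomposed, so the condition you lean on does not apply verbatim to the objects your decomposition produces. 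Second, your framing of Theorem~\ref{TSP_adjacency_NPC} as ``the reason'' the statement is open overstates matters: NP-completeness of non-adjacency testing does not by itself obstruct a diameter bound (one needs only to exhibit paths, not to decide adjacency for all pairs), and indeed the $\PYR(n)$ result and the asymmetric diameter-$2$ result of Padberg and Rao \cite{Padberg} coexist with hard or nontrivial adjacency structures. The appeal to the asymmetric case as a template also does not transfer, since that argument exploits the directed $2$-factor rigidity that is precisely absent in the symmetric problem. In short: the lower bound $d(\TSP(n)) \geq 2$ is routine as you say, but the upper bound remains unproved both in your proposal and in the literature this paper surveys.
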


The conjecture remains open.
In a series of papers, the consistently improving upper bounds were constructed \cite {Rispoli, Sierksma, Sierksma-Teunter}, and the different faces of the traveling salesperson polytope were studied \cite{Sierksma-Teunter, Sierksma-Tijssen}.
The best upper estimate at the moment is $4$ \cite{Rispoli}.
The diameter of the generalization of the traveling salesperson polytope -- the polytope of $k$-cycles, that is the convex hull of characteristic vectors of all cycles on $k$ vertices in the complete graph $K_{n}$, was considered in the paper \cite{Girlich}.

The clique number of the skeleton serves as a lower bound for computational complexity in a class of direct-type algorithms based on linear comparisons.
In addition, it was found that this characteristic is polynomial for known polynomially solvable problems and is superpolynomial for intractable problems (see, for example, \cite {Bondarenko-Maksimenko, Bondarenko-NS, Bondarenko-Nikolaev-2016}).

Thus, for the symmetric traveling salesperson polytope, a lower bound on the clique number of a skeleton is superpolynomial in dimension.

\begin{theorem} [Bondarenko \cite {Bondarenko}] \label {theorem_TSP_clique}
	The clique number of $\TSP(n)$ skeleton is superpolynomial in dimension:
	$$\omega(\TSP(n)) \geq 2^{\left(\sqrt {\left\lfloor \frac {n}{2} \right\rfloor} - 9\right) \slash 2}.$$
\end{theorem}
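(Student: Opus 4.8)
The plan is to exhibit an explicit family $\mathcal{F}$ of Hamiltonian cycles of cardinality $2^{(\sqrt{\lfloor n/2\rfloor}-9)/2}$ and to prove that every two of its members are adjacent vertices of $\TSP(n)$, so that $\mathcal{F}$ induces a clique in the skeleton. The adjacency tool I would use is the supporting-hyperplane criterion in its sharpest form: two distinct vertices $x^{v},y^{v}$ of a polytope are adjacent if and only if there is a cost vector $c\in\R^{E}$ with $c^{\top}x^{v}=c^{\top}y^{v}<c^{\top}z^{v}$ for every remaining vertex $z^{v}$. Thus the theorem splits into two tasks: build a family that is exponentially large, and produce, for each of its pairs, one cost vector that isolates exactly that pair as the set of optimal tours.

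First I would assemble the family from local gadgets. One partitions most cities of $K_{n}$ into blocks $B_{1},\dots,B_{k}$, installs in each block a fragment of the tour that can be routed in one of two ways while entering and leaving the block at the same two ports, and strings the blocks together along a fixed backbone of edges shared by every member of $\mathcal{F}$. A family member is then indexed by a binary string $b\in\{0,1\}^{k}$. To meet the stated bound I would let the gadgets grow, the $i$-th block consuming $\Theta(i)$ cities, so that $\sum_{i}|B_{i}|=\Theta(k^{2})$ forces $k=\Theta(\sqrt{n})$; carrying the constants through this triangular count is precisely what yields the explicit $\sqrt{\lfloor n/2\rfloor}$ together with the additive $9$ and the factor $1/2$ in the exponent. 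The decisive structural property to engineer is that for any two indices $b,b'$ the symmetric difference $H_{b}\triangle H_{b'}$ is confined to the blocks on which $b$ and $b'$ disagree and there assembles into a \emph{single} alternating cycle, rather than several.

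With that property in hand I would prove adjacency of a pair $H_{b},H_{b'}$ by weighting the single alternating cycle $C=H_{b}\triangle H_{b'}$: set $c$ to $0$ on the backbone and on the two phases of $C$ (the edge sets $H_{b}\cap C$ and $H_{b'}\cap C$), and to a large cost $M$ on every other edge of $K_{n}$, in particular on the unused routing inside each block and on every chord leaving the backbone. Both $H_{b}$ and $H_{b'}$ then attain cost $0$, and any cheaper-or-equal competitor must avoid all $M$-edges, hence live entirely on the backbone together with $C$; one checks that the only Hamiltonian cycles supported there are the two phases themselves. The hardest part will be this last rigidity step together with the packing it constrains: one must design the growing-block layout so that \emph{every} pairwise symmetric difference is a single alternating cycle admitting no third tour on backbone $\cup\, C$, while still fitting $2^{\Theta(\sqrt{n})}$ mutually compatible index strings into $\Theta(n)$ cities. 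Reconciling exponential size with the global single-cycle, no-third-tour condition for all pairs simultaneously is the genuine combinatorial core; once it is secured, the counting of $k$ from $\sum_{i}|B_{i}|\le\lfloor n/2\rfloor$ delivers the exponent $(\sqrt{\lfloor n/2\rfloor}-9)/2$ and the bound follows.
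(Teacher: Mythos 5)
You should first be aware that the paper contains no proof of this statement: Theorem \ref{theorem_TSP_clique} is imported verbatim from \cite{Bondarenko}, so your attempt can only be judged against the general shape of that cited construction. Your framing is sound as far as it goes: the cost-vector step is a correct adjacency certificate (set $c=0$ on the edges of $x\cup y$ and $c=M>0$ elsewhere; if $x$ and $y$ are the \emph{only} Hamiltonian cycles whose edges all lie in $x\cup y$, the minimal face for $c$ is the segment joining $x^{v}$ and $y^{v}$, hence an edge of $\TSP(n)$), and reducing the theorem to exhibiting $2^{\Theta(\sqrt{n})}$ tours with pairwise ``rigid'' unions is indeed the right strategy.

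The genuine gap is that the construction carrying all the weight is never given, and the architecture you sketch would fail. If each block is a local gadget with two routings through the same two ports, switchable independently of the other blocks, then for two index strings differing in $t\geq 2$ blocks the symmetric difference $H_{b}\triangle H_{b'}$ is a disjoint union of $t$ alternating cycles, one per disagreeing block --- the opposite of the single-cycle property you require. Worse, mixing the routings block by block produces further Hamiltonian cycles inside $H_{b}\cup H_{b'}$, and these pair off so that $x^{v}+y^{v}=z^{v}+w^{v}$ for vertices $z^{v},w^{v}$ distinct from $x^{v},y^{v}$; the midpoint of the segment then lies in the convex hull of other vertices, so any two members differing in at least two blocks are provably \emph{non}adjacent, and the naive family is an independent set in the skeleton rather than a clique. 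Thus the single-alternating-cycle requirement is not a refinement to be engineered later: it rules out precisely the port-preserving local-gadget design you propose, and the growing block sizes $\Theta(i)$ are invoked only to reproduce $\sqrt{n}$ in the exponent, with no mechanism forcing distant blocks to interact. Note also that a single alternating cycle is not by itself sufficient --- you still need the no-third-tour rigidity of $x\cup y$, which you explicitly defer --- and the constants $-9$ and $1/2$ are asserted to ``carry through'' rather than derived. As it stands, the proposal is a program whose central combinatorial lemma, the existence of $2^{\Theta(\sqrt{n})}$ tours whose pairwise unions support no third tour, is exactly the content of Bondarenko's theorem, not a step on the way to it.
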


A significant number of works are devoted to the study of polynomially solvable cases of the traveling salesperson problem (see, for example, the surveys \cite {Burkard, Gilmore}).
One of the most important classes of this kind is pyramidal tours.

Hamiltonian tour 
$$\phi = \left\langle 1,i_{1},i_{2},\ldots i_{r}, n, j_{1},j_{2},\ldots j_{n-r-2}\right\rangle$$
is called pyramidal if
$$i_{1} < i_{2} < \ldots < i_{r} \ \mbox {и} \ j_{1} > j_{2} > \ldots > j_{n-r-2}.$$
In other words, the salesperson starts in city $1$, then visits some cities in increasing order, reaches city $n$ and returns to city $1$, visiting the remaining cities in decreasing order.
Pyramidal tours have two nice properties. First, a minimum cost pyramidal tour can be determined in $O(n^2)$ time by dynamic programming, while the total number of pyramidal tours is exponential in $n$ \cite {Klyaus, Gilmore}. Second, there exist certain combinatorial structures of distance matrices that guarantee the existence of the shortest tour that is pyramidal.

For the first time, constraints on the distance matrices associated with polynomially solvable pyramidal cases of the traveling salesperson problem were studied by Aizenshtat and Kravchuk \cite {Aizenshtat}, where a particular case of the Monge matrices was considered.
Gilmour, Lawler, and Shmoys generalized this result to the Monge matrices of an arbitrary form \cite {Gilmore}.
Later, various classes of distance matrices with similar properties were described, including Van der Veen matrices, Demidenko matrices, Kalmanson matrices, Supnick matrices, and many others \cite {Burkard, Gilmore}.
A complete classification of the constraints on the distance matrices over four points, where the restriction is imposed on the distances between any four cities that generate a polynomially solvable instances of the traveling salesperson problem, is given in \cite {Deineko}.

However, the polyhedral characteristics of pyramidal tours and their relation to the general traveling salesperson problem have never before been the object of a direct research.

The results of this paper were presented at the 17th Baikal international school-seminar ``Methods of Optimization and Their Applications'', Maksimikha, Buryatia, July 31 -- August 6, 2017 \cite {Bondarenko-Nikolaev-BITSS} and the European conference on combinatorics, graph theory and applications (Eurocomb 2017), Vienna, Austria, August 28 -- September 1, 2017 \cite{Bondarenko-Nikolaev-Eurocomb}.

\section {The pyramidal tours polytope}
\indent

We consider a complete weighted undirected graph $K_{n}$ with the edge set $E$. Let $PT_{n}$ be the set of all pyramidal tours in $K_{n}$. With each pyramidal tour $x \in PT_{n}$ we associate a characteristic vector $x^{v} \in \R^{E}$ by the following rule:
\begin{gather*}
x^{v}_{e} =
\begin{cases}
1,& \mbox {if an edge } e \mbox { is contained in the tour } x,\\
0,& \mbox {otherwise.}
\end{cases}
\end{gather*}

The polytope
$$\PYR(n) = \conv \{x^{v}\ | \ x \in PT_{n}\}$$
is called the pyramidal tours polytope.

We use a special encoding to represent the pyramidal tours. With each pyramidal tour $x \in PT_{n}$ we associate a $0/1$ vector $x^{c} \in \R^{n-3}$ by the following rule:
\begin{gather*}
\forall i\ (3 \leq i \leq n-1)\\
x^{c}_{i} =
\begin{cases}
1,& \mbox {if a vertex } i \mbox { is contained in the tour } x\\
& \mbox{in increasing order},\\
0,& \mbox {otherwise.}
\end{cases}
\end{gather*}

Since we consider a symmetric traveling salesperson problem, all tours are undirected, so the increasing and decreasing orders are a matter of agreement. 
Let the edge $(1,2)$ define the increasing order. 
Therefore, we begin the numbering of vertices in the encoding from $3$, since it is the first potential branching on the pyramidal tour. 
Thus, the total number of pyramidal tours in $K_{n}$ (vertices of the polytope $\PYR(n)$) is $2^{n-3}$.
An example of a pyramidal tour and its corresponding encoding is shown in Fig. \ref{Fig_example_pyramidal_tour}.

\begin{figure} [h]
	\centering
	\begin{tikzpicture}[scale=1.0]
	\begin{scope}[every node/.style={circle,thick,draw,inner sep=3pt}]
	\node (A) at (0,0) {1};
	\node (B) at (1,0) {2};
	\node (C) at (2,0) {3};
	\node (D) at (3,0) {4};
	\node (E) at (4,0) {5};
	\node (F) at (5,0) {6};
	\node (G) at (6,0) {7};
	\node (H) at (7,0) {8};
	\end{scope}
	\draw [line width=0.25mm] (A) edge (B);
	\draw [line width=0.3mm] (B) edge [bend left=50] (D);
	\draw [line width=0.25mm] (D) edge (E);
	\draw [line width=0.3mm] (E) edge [bend left=50] (G);
	\draw [line width=0.25mm] (G) edge (H);
	\draw [line width=0.3mm] (H) edge [bend left=50] (F);
	\draw [line width=0.3mm] (F) edge [bend left=50] (C);
	\draw [line width=0.3mm] (C) edge [bend left=50] (A);
	\end{tikzpicture}
	\caption {An example of a pyramidal tour $\left\langle 0,1,1,0,1 \right\rangle$}
	\label{Fig_example_pyramidal_tour}
\end{figure}
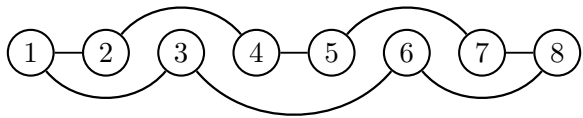

\section {The skeleton of the pyramidal tours polytope}
\indent 

\begin{lemma} \label {lemma_adjacency}
	Vertices $x^{v}$ and $y^{v}$ of the polytope $\PYR(n)$ are not adjacent if and only if it is possible to compose another pyramidal tour $z$ of the edges of tours $x$ and $y$.
\end{lemma}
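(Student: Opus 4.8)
The plan is to combine the standard polyhedral characterization of adjacency with the combinatorial structure of pyramidal tours. Recall the general fact: two vertices $p, q$ of a polytope $P = \conv(V)$ fail to be adjacent if and only if the midpoint $\frac{1}{2}(p+q)$ admits a representation as a convex combination $\sum_i \lambda_i v_i$ of vertices $v_i \in V$ (all $\lambda_i > 0$) in which some $v_i \notin \{p,q\}$ occurs. This holds because the minimal face containing a point is spanned by the vertices appearing in any of its convex representations, so $[p,q]$ is a one-dimensional face precisely when no such extra vertex can appear. I would take this equivalence as the starting point and translate both directions into the language of edge sets, writing $X, Y$ for the edge sets of the tours $x, y$.

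For the direction ``not adjacent $\Rightarrow$ a new tour can be composed'', I would start from a representation $\frac{1}{2}(x^v + y^v) = \sum_i \lambda_i z_i^v$ with all $\lambda_i > 0$ and some $z_i \notin \{x,y\}$. A support analysis of the midpoint pins down each $z_i$: on every edge $e \notin X \cup Y$ the midpoint is $0$, forcing $(z_i^v)_e = 0$, while on every common edge $e \in X \cap Y$ the midpoint is $1$, forcing $(z_i^v)_e = 1$. Hence each such $z_i$ is a pyramidal tour whose edges all lie in $X \cup Y$ and which contains every common edge; choosing $z := z_i$ for an index with $z_i \notin \{x,y\}$ yields a pyramidal tour composed of the edges of $x$ and $y$, as required.

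For the converse, suppose a pyramidal tour $z \notin \{x,y\}$ is assembled from the edges of $x$ and $y$. The key is to exhibit a complementary tour: set $w^v := x^v + y^v - z^v$ and prove that $w^v$ is again the characteristic vector of a pyramidal tour. Since the combined edge multiset of $x$ and $y$ gives every vertex of $K_n$ degree $4$ and $z$ removes degree $2$ at each vertex, $w$ is automatically $2$-regular as a multigraph; the substantive points are that $z$ must use every common edge (so that $w^v$ is a genuine $0/1$ vector) and that $w$ is connected and again increasing/decreasing. I would establish these by decomposing the symmetric difference $X \triangle Y$ into its alternating cycles and arguing that, because $z$ is pyramidal, on each alternating cycle $z$ selects one ``side'' and $w$ the other, with the monotone up/down order of pyramidal tours guaranteeing that both reassemble into single pyramidal Hamiltonian cycles. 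Once $w$ is known to be a pyramidal tour, the identity $\frac{1}{2}(x^v + y^v) = \frac{1}{2}(z^v + w^v)$ exhibits a second representation of the midpoint using the vertices $z, w \neq x, y$, so by the general criterion $x^v$ and $y^v$ are not adjacent.

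I expect the main obstacle to be this converse structural claim: ruling out that the leftover edges split into several short cycles or violate monotonicity, and confirming that the given $z$ is forced to use all of $X \cap Y$. This is exactly where the special geometry of pyramidal tours — the fixed peak $n$, the fixed start $(1,2)$, and the strict ordering recorded by the encoding $x^c$ — must be exploited, rather than the weaker Hamiltonicity available for $\TSP(n)$, where the analogous adjacency test is already intractable by Theorem \ref{TSP_adjacency_NPC}. I would aim to phrase the alternating-cycle exchange directly in terms of the encodings $x^c, y^c, z^c$, which should both make the verification of pyramidality transparent and yield the linear-time adjacency check announced in the abstract.
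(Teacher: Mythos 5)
Your overall architecture coincides with the paper's: the necessity direction via a convex representation of a point of the segment $[x^{v},y^{v}]$ and a support analysis is exactly the paper's argument (the paper needs only that all vertices are $0/1$, so any $z$ with positive coefficient uses only edges of $x\cup y$), and for the converse the paper likewise forms the complementary tour $w = (x\cup y)\setminus z$ in the edge multigraph, i.e.\ $w^{v}=x^{v}+y^{v}-z^{v}$, and concludes nonadjacency from $\tfrac12(x^{v}+y^{v})=\tfrac12(z^{v}+w^{v})$.

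However, your converse is a plan, not a proof: the step you yourself flag as the main obstacle --- that $z$ must use every edge of $X\cap Y$ (so $w$ is simple), that $w$ is a \emph{single} cycle, and that it is pyramidal --- is precisely the content of the lemma, and the alternating-cycle route you sketch is both unproven and shakier than necessary. The claim that ``$z$ selects one side of each alternating cycle'' is not well posed at vertices of degree $4$ in $X\triangle Y$, where the decomposition into alternating cycles is not unique and the claim can fail for a bad pairing; fixing it requires pairing the two lower edges and the two upper edges at each such vertex, which already amounts to the one observation that does all the work. Namely: in a pyramidal tour every vertex $k$ with $1<k<n$ has exactly one neighbor below and one above. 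Hence in the multigraph $G=x\cup y$ each interior vertex carries exactly two lower and two upper edge-copies; since $z$ is pyramidal with all its edges in $x\cup y$, it removes exactly one lower and one upper copy at each interior vertex. This simultaneously forces $z$ to contain every common edge (a shared lower, resp.\ upper, edge occupies \emph{both} lower, resp.\ upper, slots at its higher, resp.\ lower, endpoint), so $w^{v}$ is a genuine $0/1$ vector, and it shows every interior vertex of the $2$-regular graph $w$ has one smaller and one larger neighbor. If $w$ split into several cycles, some cycle would avoid $n$, and its largest vertex would have two smaller neighbors --- a contradiction; so $w$ is a single Hamiltonian cycle, and the one-down/one-up property at interior vertices is exactly pyramidality. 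This short degree-and-maximum argument is what the paper uses and is what your proposal is missing; without it (or an honest completion of your exchange argument) the sufficiency direction remains unproven.
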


\begin{proof}
	
	\textit{Necessity} follows directly from the assumption of nonadjacency of $x^{v}$ and $y^{v}$.
	Indeed, the nonadjacency of $x^{v}$ and $y^{v}$ means that the segment joining them contains some convex combination of the remaining vertices of the polytope $\PYR (n)$:
	\begin{gather*}
	\alpha x^{v} + \beta y^{v} = \sum {\gamma_{z} z^{v}},\\
	\alpha + \beta = \sum {\gamma_{z}} = 1,\\
	\alpha \geq 0, \ \beta \geq 0, \ \gamma_{z} \geq 0.
	\end{gather*}
	In this convex combination, at least one point $z^{v}$ has a positive coefficient $\gamma_{z}$.
	Therefore, since all vertices of the polytope $\PYR (n)$ are Boolean, it follows that the pyramidal tour $z$ that corresponds to the vertex $z^{v}$ is composed of the edges of tours $x$ and $y$.
	
	Let us prove \textit{sufficiency}.
	Suppose that it is possible to compose a new pyramidal tour $z$ of the edges of tours $x$ and $y$.
	We consider a multigraph $G = x \cup y$ in which the edges belonging simultaneously to $x$ and $y$ are included twice.
	Then the degree of each vertex of the graph $G$ is equal to $4$.
	We construct a tour $w = G \backslash z$. Let us prove that $w$ is also a pyramidal tour.
	By construction, the degree of each vertex of $w$ is $2$.
	Thus, $w$ consists of one or more cycles.
	Moreover, for each $k$ ($1 < k < n$) among the remaining two edges incident to the vertex $k$, one has the form $(i,k)$, where $i < k$, and the other -- $(k,j)$, where $k < j$.
	If $w$ consists of more than one cycle, then there is a cycle that does not contain the vertex $n$.
	Let $k$ be the vertex with the largest number in this cycle, then both vertices adjacent to $k$ in the graph $w$ have smaller numbers.
	Contradiction, the Lemma \ref{lemma_adjacency} is proved.
\end{proof}

It should be noted that for a general traveling salesperson problem a similar statement: if it is possible to compose a third Hamiltonian cycle $z$ from the edges of the Hamiltonian cycles $x$ and $y$, then the remaining edges also form a Hamiltonian cycle, is false.
As an example, we can consider the Hamiltonian cycles shown in Fig. \ref{Fig_general_TSP_example}.

\begin{figure} [h]
	\centering
	\begin{tikzpicture}[scale=1.0]
	\begin{scope}[every node/.style={circle,thick,draw}]
	\node (A) at (0,0) {1};
	\node (B) at (1,1) {2};
	\node (C) at (2.5,1) {3};
	\node (D) at (3.5,0) {4};
	\node (E) at (2.5,-1) {5};
	\node (F) at (1,-1) {6};
	\end{scope}
	\draw [line width=0.3mm] (A) edge (D);
	\draw [line width=0.3mm] (D) edge (E);
	\draw [line width=0.3mm] (E) edge (C);
	\draw [line width=0.3mm] (C) edge (B);
	\draw [line width=0.3mm] (B) edge (F);
	\draw [line width=0.3mm] (F) edge (A);
	\draw (1.75, -1.7) node{\textit{x}};
	\end{tikzpicture}
	\hspace*{6mm}
	\begin{tikzpicture}[scale=1.0]
	\begin{scope}[every node/.style={circle,thick,draw}]
	\node (A) at (0,0) {1};
	\node (B) at (1,1) {2};
	\node (C) at (2.5,1) {3};
	\node (D) at (3.5,0) {4};
	\node (E) at (2.5,-1) {5};
	\node (F) at (1,-1) {6};
	\end{scope}
	\draw [line width=0.3mm] (A) edge (B);
	\draw [line width=0.3mm] (B) edge (F);
	\draw [line width=0.3mm] (F) edge (D);
	\draw [line width=0.3mm] (D) edge (C);
	\draw [line width=0.3mm] (C) edge (E);
	\draw [line width=0.3mm] (E) edge (A);
	\draw (1.75, -1.7) node{\textit{y}};
	\end{tikzpicture}
	\\
	\begin{tikzpicture}[scale=1.0]
	\begin{scope}[every node/.style={circle,thick,draw}]
	\node (A) at (0,0) {1};
	\node (B) at (1,1) {2};
	\node (C) at (2.5,1) {3};
	\node (D) at (3.5,0) {4};
	\node (E) at (2.5,-1) {5};
	\node (F) at (1,-1) {6};
	\end{scope}
	\draw [line width=0.3mm] (A) edge (D);
	\draw [line width=0.3mm] (D) edge (F);
	\draw [line width=0.3mm] (F) edge (B);
	\draw [line width=0.3mm] (B) edge (C);
	\draw [line width=0.3mm] (C) edge (E);
	\draw [line width=0.3mm] (E) edge (A);
	\draw (1.75, -1.7) node{\textit{z}};
	\end{tikzpicture}
	\hspace*{6mm}
	\begin{tikzpicture}[scale=1.0]
	\begin{scope}[every node/.style={circle,thick,draw}]
	\node (A) at (0,0) {1};
	\node (B) at (1,1) {2};
	\node (C) at (2.5,1) {3};
	\node (D) at (3.5,0) {4};
	\node (E) at (2.5,-1) {5};
	\node (F) at (1,-1) {6};
	\end{scope}
	\draw [line width=0.3mm] (A) edge (B);
	\draw [line width=0.3mm] (B) edge (F);
	\draw [line width=0.3mm] (F) edge (A);
	\draw [line width=0.3mm] (C) edge (D);
	\draw [line width=0.3mm] (D) edge (E);
	\draw [line width=0.3mm] (E) edge (C);
	\draw (1.75, -1.7) node{\textit{w}};
	\end{tikzpicture}
	\caption {An example of $ w = (x \cup y) \backslash z $ that is not a Hamiltonian cycle}
	\label {Fig_general_TSP_example}
\end{figure}
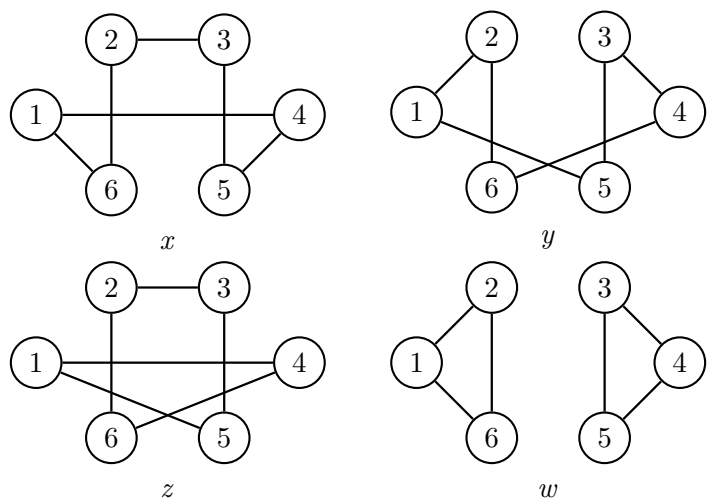

\begin{theorem} \label {theorem_adjacency}
	Vertices $x^{v}$ and $y^{v}$ of the polytope $\PYR(n)$ are not adjacent if and only if one of the following two sufficient conditions is satisfied:
	\begin{enumerate}[1)]
		\item there exists such $k$ $(3 < k < n-2)$ that
		\begin{equation} \label{first_sufficient_equality}
		x^{c}_{k} = y^{c}_{k} \neq x^{c}_{k+1} = y^{c}_{k+1},
		\end{equation}
		and there exist such $i$ $(i < k)$ and $j$ $(j > k+1)$ that
		\begin{equation} \label{first_sufficient_inequality}
		x^{c}_{i} \neq y^{c}_{i},\ x^{c}_{j} \neq y^{c}_{j};
		\end{equation}
		\item there exists such $k$ $(3 \leq k < n-2)$ that
		\begin{equation} \label {second_sufficient_inequality}
		x^{c}_{k} = y^{c}_{k+1} \neq x^{c}_{k+1} = y^{c}_{k},
		\end{equation}
		and there exists such $j$ $(j > k+1)$ that
		\begin{equation} \label {second_sufficient_equality}
		x^{c}_{j} = y^{c}_{j}.
		\end{equation}
	\end{enumerate}
\end{theorem}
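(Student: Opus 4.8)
The plan is to reduce the statement to Lemma \ref{lemma_adjacency} and then argue entirely on the level of the encodings $x^{c},y^{c}$. First I would record the elementary but decisive observation that \emph{every} $0/1$ vector in $\R^{n-3}$ is the encoding of some pyramidal tour; combined with Lemma \ref{lemma_adjacency} this shows that $x^{v}$ and $y^{v}$ are nonadjacent exactly when there is a third encoding $z^{c}\notin\{x^{c},y^{c}\}$ all of whose tour edges already occur in $x$ or in $y$. The second preparatory step is a clean edge description: writing $c_{2}=1$ and treating the common endpoints $1$ and $n$ of the ascending and descending branches as wildcards $c_{1}=c_{n}=\ast$, a pair $\{a,b\}$ with $a<b$ is an edge of the tour iff $c_{a}=c_{b}$ and every index strictly between carries the opposite label. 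In particular a short edge $\{m,m+1\}$ is present iff $c_{m}=c_{m+1}$, \emph{independently of the common value}, so whether an edge is available in $x\cup y$ depends only on the equality pattern of the labels and not on their orientation.

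For the sufficiency of condition 1 I would use the ``cut'' tour $z$ defined by $z^{c}_{m}=x^{c}_{m}$ for $m\le k$ and $z^{c}_{m}=y^{c}_{m}$ for $m\ge k+1$. The guaranteed disagreements at $i<k$ and at $j>k+1$ force $z\neq y$ and $z\neq x$ respectively. Every edge of $z$ with both endpoints in $\{1,\dots,k\}$ is inherited verbatim from $x$, and every edge with both endpoints in $\{k+1,\dots,n\}$ from $y$; the only remaining edges are the two that cross the cut $k\mid k+1$, one on each branch. Using the \emph{agreed} boundary $x^{c}_{k}=y^{c}_{k}\neq x^{c}_{k+1}=y^{c}_{k+1}$ I would check directly that the crossing edge leaving $k$ is an edge of $y$ and the crossing edge entering $k+1$ is an edge of $x$; the otherwise delicate endpoint $1$ causes no trouble because $z^{c}_{k+1}$ is forced to the value opposite to $z^{c}_{k}$. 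Hence $E(z)\subseteq E(x)\cup E(y)$, and nonadjacency follows from Lemma \ref{lemma_adjacency}.

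Condition 2 is subtler, and this is where I expect the real work to lie. Here $x$ and $y$ are \emph{crossed} at $k,k+1$, and the naive cut fails because the forced short edge $\{k,k+1\}$ belongs to neither tour. Instead I would reroute the two branches through the crossing at $k,k+1$ and reconnect them at the agreement index $j$, obtaining a tour whose only finitely many altered edges can then be matched against $x$ and $y$; the equality $x^{c}_{j}=y^{c}_{j}$ is precisely what makes the reconnecting edge available and simultaneously guarantees $z\neq x$ and $z\neq y$. Exploratory examples show that no single closed-form flip works in all configurations, so the verification must split into a small number of cases according to the labels near $j$ and the behaviour at the special vertices $1$ and $n$; these case distinctions, rather than any uniform formula, are the principal obstacle.

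Finally, for necessity I would argue by contraposition: assuming a reassembling tour $z$ exists, I would examine the indices at which $z$ stops following $x$ and starts following $y$, together with any maximal agreed block on which $z$ takes the value opposite to both (such ``double flips'' do occur). Each such transition must be supported by the edges actually present in $x\cup y$, and the short-edge criterion above shows that a transition can be sustained only by an agreed label boundary flanked by disagreements on both sides (yielding condition 1) or by a crossed pair admitting a later agreement (yielding condition 2). Proving that these are the \emph{only} admissible transitions, while keeping control of the two exceptional vertices $1$ and $n$ throughout, is the technical heart of the argument.
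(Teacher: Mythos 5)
Your framework is the right one and matches the paper up to a point: the reduction via Lemma \ref{lemma_adjacency}, the observation that every $0/1$ vector in $\R^{n-3}$ encodes a pyramidal tour, your equality-pattern edge criterion, and the cut construction for condition 1 are all correct and essentially what the paper does. The first genuine gap is in sufficiency of condition 2, where your assertion that ``no single closed-form flip works in all configurations'' is false, and the case analysis you defer to is never carried out. The paper uses exactly one uniform formula: $z^{c}_{i}=x^{c}_{i}$ for $i\leq k$ and $z^{c}_{i}=1-y^{c}_{i}$ for $i>k$; that is, follow $x$ up to $k$, then traverse the tail of $y$ in the \emph{reversed} direction, which is legitimate precisely because the problem is symmetric. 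Your own edge criterion verifies it uniformly: assume WLOG $x^{c}_{k}=y^{c}_{k+1}=1$ and $x^{c}_{k+1}=y^{c}_{k}=0$. Edges of $z$ with both endpoints $\leq k$ are edges of $x$; edges with both endpoints $>k$ are edges of $y$, since complementing the whole tail preserves the equality pattern of the labels (this is where your remark that availability of an edge is orientation-independent does the work). There are exactly two crossing edges: the ascending one is $\{k,b\}$, where $b$ is the least index $>k+1$ with $y^{c}_{b}=0$ (or $b=n$), and since $y^{c}_{k}=0$ with $y^{c}_{i}=1$ strictly between, this is a descending edge of $y$; the descending one is $\{a,k+1\}$, where $a$ is the largest index $\leq k$ with $x^{c}_{a}=0$ (or $a=1$), a descending edge of $x$ since $x^{c}_{k+1}=0$. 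The index $j$ of (\ref{second_sufficient_equality}) is needed only to force $z\neq x$, while $z\neq y$ is seen at coordinate $k+1$; no cases near $j$ or at the vertices $1$ and $n$ arise beyond the wildcard convention you already set up.

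The second and larger gap is necessity, which you explicitly leave as ``the technical heart'' with only a heuristic classification of transitions (including the correctly observed ``double-flip'' blocks, which correspond to the reversal above). A proof must actually extract, from an arbitrary reassembling tour $z$, either a block (\ref{first_sufficient_equality}) flanked by disagreements on both sides or a crossed block (\ref{second_sufficient_inequality}) with a later agreement, and your sketch does not show how. The paper's mechanism is concrete: first argue $z$ may be assumed to enter and leave vertex $n$ along edges of one tour, say $y$; among the edges of $z$ unique to $x$, choose the ascending one $(i,k)$ and the descending one $(s,q)$ with maximal $k$ and $q$; pyramidality then forces $z^{c}_{j}=x^{c}_{j}$ on $[i,k]$ and $z^{c}_{j}=y^{c}_{j}$ (respectively $z^{c}_{j}=1-y^{c}_{j}$) for all $j\geq q$, according as $y^{c}_{k}=1$ or $y^{c}_{k}=0$, which produces the required block at positions $k-1,k$; finally, the flanking indices $i$ and $j$ of (\ref{first_sufficient_inequality}) and the agreement index of (\ref{second_sufficient_equality}) are obtained by a termination argument (if the relevant fragment of $x$ and $y$ coincided, $z$ would coincide with a tour on $[i,n]$, so rename $y$ as $x$ and repeat with a strictly smaller maximal index $\tilde{k}<i$). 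Without this extraction-plus-termination step, your outline does not establish that the two listed conditions are the \emph{only} ways a transition can be sustained, so as it stands the ``only if'' direction is a plan rather than a proof.
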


\begin{proof}
	\textit{Sufficiency}. Let the first sufficient condition be satisfied. We construct a pyramidal tour $z$ by the following rule:
	\begin{gather*}
	z^{c}_{i} = 
	\begin{cases}
	x^{c}_{i},& \mbox {if } i \leq k, \\
	y^{c}_{i},& \mbox {if } i > k. 
	\end{cases}
	\end{gather*}
	By construction, the tour $z$ consists entirely of the edges of the tours $x$ and $y$.
	We make a jump between the edges of $x$ and $y$ by the condition (\ref {first_sufficient_equality}).
	In this case, $z$ is different from the tours $x$ and $y$ by (\ref {first_sufficient_inequality}):
	$$z^{c}_{i} \neq y^{c}_{i},\ z^{c}_{j} \neq x^{c}_{j}.$$
	By Lemma \ref {lemma_adjacency}, the vertices $x^{v}$ and $y^{v}$ of the polytope $\PYR (n) $ are not adjacent.
	
	An example of the first sufficient condition for the tours $x^{c} = \left\langle 1,1,0,1,1 \right\rangle$, $y^{c} = \left\langle 0,1,0,0,1 \right\rangle$ and $k = 4$ is shown in Fig. \ref{Fig_first_sufficient_condition}. 
	
	\begin{figure} [h]
		\centering
		\begin{tikzpicture}[scale=1.0]
		\begin{scope}[every node/.style={circle,thick,draw,inner sep=3pt}]
		\node (A) at (0,0) {1};
		\node (B) at (1,0) {2};
		\node (C) at (2,0) {3};
		\node (D) at (3,0) {4};
		\node (E) at (4,0) {5};
		\node (F) at (5,0) {6};
		\node (G) at (6,0) {7};
		\node (H) at (7,0) {8};
		\end{scope}
		\draw [line width=0.25mm] (A) edge (B);
		\draw [line width=0.25mm] (B) edge (C);
		\draw [line width=0.25mm] (C) edge (D);
		\draw [line width=0.3mm] (D) edge [bend left=50] (F);
		\draw [line width=0.25mm] (F) edge (G);
		\draw [line width=0.25mm] (G) edge (H);
		\draw [line width=0.3mm] (H) edge [bend left=45] (E);
		\draw [line width=0.3mm] (E) edge [bend left=40] (A);
		\draw (-1, 0) node{\textit{x}};
		\end{tikzpicture}
		\\
		\begin{tikzpicture}[scale=1.0]
		\begin{scope}[every node/.style={circle,thick,draw,inner sep=3pt}]
		\node (A) at (0,0) {1};
		\node (B) at (1,0) {2};
		\node (C) at (2,0) {3};
		\node (D) at (3,0) {4};
		\node (E) at (4,0) {5};
		\node (F) at (5,0) {6};
		\node (G) at (6,0) {7};
		\node (H) at (7,0) {8};
		\end{scope}
		\draw [line width=0.25mm] (A) edge (B);
		\draw [line width=0.3mm] (B) edge [bend left=50] (D);
		\draw [line width=0.3mm] (D) edge [bend left=45] (G);
		\draw [line width=0.25mm] (G) edge (H);
		\draw [line width=0.3mm] (H) edge [bend left=50] (F);
		\draw [line width=0.25mm] (F) edge (E);
		\draw [line width=0.3mm] (E) edge [bend left=50] (C);
		\draw [line width=0.3mm] (C) edge [bend left=50] (A);
		\draw (-1, 0) node{\textit{y}};
		\end{tikzpicture}
		\\
		\begin{tikzpicture}[scale=1.0]
		\begin{scope}[every node/.style={circle,thick,draw,inner sep=3pt}]
		\node (A) at (0,0) {1};
		\node (B) at (1,0) {2};
		\node (C) at (2,0) {3};
		\node (D) at (3,0) {4};
		\node (E) at (4,0) {5};
		\node (F) at (5,0) {6};
		\node (G) at (6,0) {7};
		\node (H) at (7,0) {8};
		\end{scope}
		\draw [line width=0.25mm] (A) edge (B);
		\draw [line width=0.25mm] (B) edge (C);
		\draw [line width=0.25mm] (C) edge (D);
		\draw [line width=0.3mm] (D) edge [bend left=45] (G);
		\draw [line width=0.25mm] (G) edge (H);
		\draw [line width=0.3mm] (H) edge [bend left=50] (F);		
		\draw [line width=0.25mm] (F) edge (E);
		\draw [line width=0.3mm] (E) edge [bend left=40] (A);
		\draw (-1, 0) node{\textit{z}};
		\end{tikzpicture}
		\caption{An example of the first sufficient condition}
		\label{Fig_first_sufficient_condition}
	\end{figure}
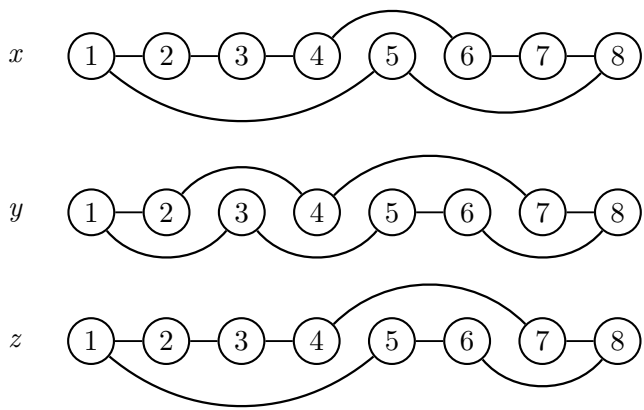
	
	Now let the second sufficient condition be satisfied. We construct a pyramidal tour $z$ by the following rule:
	\begin{gather*}
	z^{c}_{i} = 
	\begin{cases}
	x^{c}_{i},& \mbox {if } i \leq k, \\
	1 - y^{c}_{i},& \mbox {if } i > k. 
	\end{cases}
	\end{gather*}
	
	The difference from the first sufficient condition is that after the jump between tours by (\ref {second_sufficient_inequality}), the edges of $y$ in the decreasing order become the edges of $z$ in the increasing order and vice versa.
	This is possible since we consider the symmetric traveling salesperson problem.
	The constructed tour $z$ is different from the tours $x$ and $y$ due to (\ref {second_sufficient_equality}):
	$$z^{c}_{j} \neq x^{c}_{j} = y^{c}_{j}.$$
	By Lemma \ref {lemma_adjacency}, the vertices $x^{v}$ and $y^{v}$ of the polytope $\PYR (n) $ are not adjacent.
	
	An example of the second sufficient condition for the tours $x^{c} = \left\langle 0,1,0,0,0 \right\rangle$, $y^{c} = \left\langle 1,0,1,0,0 \right\rangle$ and $k = 4$ is shown in Fig. \ref{Fig_second_sufficient_condition}.
	
	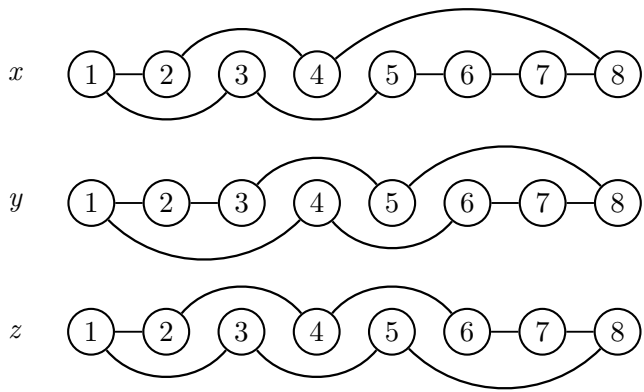
\begin{figure} [h]
		\centering
		\begin{tikzpicture}[scale=1.0]
		\begin{scope}[every node/.style={circle,thick,draw,inner sep=3pt}]
		\node (A) at (0,0) {1};
		\node (B) at (1,0) {2};
		\node (C) at (2,0) {3};
		\node (D) at (3,0) {4};
		\node (E) at (4,0) {5};
		\node (F) at (5,0) {6};
		\node (G) at (6,0) {7};
		\node (H) at (7,0) {8};
		\end{scope}
		\draw [line width=0.25mm] (A) edge (B);
		\draw [line width=0.3mm] (B) edge [bend left=50] (D);
		\draw [line width=0.3mm] (D) edge [bend left=40] (H);
		\draw [line width=0.25mm] (H) edge (G);
		\draw [line width=0.25mm] (G) edge (F);
		\draw [line width=0.25mm] (F) edge (E);
		\draw [line width=0.3mm] (E) edge [bend left=50] (C);		
		\draw [line width=0.3mm] (C) edge [bend left=50] (A);		
		\draw (-1, 0) node{\textit{x}};
		\end{tikzpicture}
		\\
		\begin{tikzpicture}[scale=1.0]
		\begin{scope}[every node/.style={circle,thick,draw,inner sep=3pt}]
		\node (A) at (0,0) {1};
		\node (B) at (1,0) {2};
		\node (C) at (2,0) {3};
		\node (D) at (3,0) {4};
		\node (E) at (4,0) {5};
		\node (F) at (5,0) {6};
		\node (G) at (6,0) {7};
		\node (H) at (7,0) {8};
		\end{scope}
		\draw [line width=0.25mm] (A) edge (B);
		\draw [line width=0.25mm] (B) edge (C);		
		\draw [line width=0.3mm] (C) edge [bend left=50] (E);		
		\draw [line width=0.3mm] (E) edge [bend left=45] (H);
		\draw [line width=0.25mm] (H) edge (G);		
		\draw [line width=0.25mm] (G) edge (F);		
		\draw [line width=0.3mm] (F) edge [bend left=50] (D);		
		\draw [line width=0.3mm] (D) edge [bend left=45] (A);
		\draw (-1, 0) node{\textit{y}};
		\end{tikzpicture}
		\\
		\begin{tikzpicture}[scale=1.0]
		\begin{scope}[every node/.style={circle,thick,draw,inner sep=3pt}]
		\node (A) at (0,0) {1};
		\node (B) at (1,0) {2};
		\node (C) at (2,0) {3};
		\node (D) at (3,0) {4};
		\node (E) at (4,0) {5};
		\node (F) at (5,0) {6};
		\node (G) at (6,0) {7};
		\node (H) at (7,0) {8};
		\end{scope}
		\draw [line width=0.25mm] (A) edge (B);
		\draw [line width=0.3mm] (B) edge [bend left=50] (D);
		\draw [line width=0.3mm] (D) edge [bend left=50] (F);
		\draw [line width=0.25mm] (F) edge (G);
		\draw [line width=0.25mm] (G) edge (H);
		\draw [line width=0.3mm] (H) edge [bend left=45] (E);
		\draw [line width=0.3mm] (E) edge [bend left=50] (C);
		\draw [line width=0.3mm] (C) edge [bend left=50] (A);
		\draw (-1, 0) node{\textit{z}};
		\end{tikzpicture}
		\caption{An example of the second sufficient condition}
		\label{Fig_second_sufficient_condition}
	\end{figure}
	
	\textit {Necessity.}
	Let $z$ be a pyramidal tour consisting of edges of $x$ and $y$, but different from them.
	Suppose that the tour $z$ enters the vertex $n$ along the edge $(i,n)$ of the tour $x$, while leaves by some edge of the tour $y$, so at the vertex $n$ there is a jump between two tours.
	Then, to ensure the pyramidality, the tour $z$ must visit in the decreasing order all the vertices $j$ ($i < j < n$) that were omitted earlier in the tour $x$ and bypass the already visited vertex $i$:
	$$x^{c}_{j} = y^{c}_{j},\ x^{c}_{i} = y^{c}_{i} \mbox { или } x^{c}_{j} = 1 - y^{c}_{j},\ x^{c}_{i} = 1 - y^{c}_{i}.$$
	But in this case the edge $(i, n)$ also belongs to the tour $y$.
	Thus, we can assume that the tour $z$ enters the vertex $n$ and leaves it along the edges of one tour.
	Let it be the tour $y$ (otherwise we can replace $y$ with $x$ here and below).
	In addition, if some edge of $z$ belongs to both tours $x$ and $y$, then we assume that it is an edge of $y$.
	
	Note that $z$ contains at least two edges from $x \backslash y$, the edges that are unique for the tour $x$: one edge for the increasing order and one for the decreasing order. Otherwise, $z$ coincides with $y$.
	
	We choose a pair of unique edges of $x$ that are in $z$: $(i,k)$ for the increasing order, $(s,q)$ for the decreasing order, with the largest vertex numbers $k$ and $q$.
	The case $k = q$ is excluded, otherwise the vertex with the number $k$ is visited twice in the tour $z$. We also note that by construction $k < n$.
	
	Without loss of generality we assume $x^{c}_{i} = x^{c}_{k} = 1 $. The case $x^{c}_{i} = x^{c}_{k} = 0 $ is treated similarly.
	By construction, to ensure the pyramidal property, we have:
	\begin{gather}
	\nonumber
	x^{c}_{i} = x^{c}_{k} = z^{c}_{i} = z^{c}_{k} = 1,\\
	\forall j\ (i < j < k):\ z^{c}_{j} = x^{c}_{j} = 0. \label {ik_x}
	\end{gather}
	Thus, on the fragment $\left[i, k \right]$ the tour $z$ has the same edges as the tour $x$.
	
	Since the edge $(i,k)$ has the largest number $k$ among the unique edges of $x$ that are in $z$, the tour $z$ at the vertex $k$ makes a jump from the edges of $x$ to the edges of $y$, passes through the vertex $n$ and moves along the edges of $y$ to the vertex $q$ where the next unique edge of $x$ begins. Let us consider two cases.
	
	\begin{enumerate}
		\item Let $y^{c}_{k} = 1$. Then on the fragment $\left[q,n\right]$ the tour $z$ inherits the increasing and decreasing orders of the tour $y$:
		\begin {equation} \label {qn_y}
		\forall j \geq q:\ z^{c}_{j} = y^{c}_{j}.
		\end {equation}
		By combining (\ref {ik_x}) and (\ref{qn_y}) with the inequality $q < k$, we obtain a block of the required form (\ref{first_sufficient_equality}):
		\begin {gather*}
		x^{c}_{k} = z^{c}_{k} = y^{c}_{k} = 1,\\
		x^{c}_{k-1} = z^{c}_{k-1} = y^{c}_{k-1} = 0.
		\end {gather*}
		Note that the case $i = k-1$ is excluded, otherwise the edge $(i,k)$ belongs to the tour $y$ and cannot be unique for $x$.
		
		It remains to verify that the condition (\ref {first_sufficient_inequality}) is satisfied.
		We suppose that for any $t$ ($t < k-1$): $ x^{c}_{t} = y^{c}_{t}$. Then, by (\ref{qn_y}), the tours $z$ and $y$ coincide, a contradiction.
		
		Now we suppose that for any $j$ ($j > k$): $x^{c}_{j} = y^{c}_{j}$. Then on the fragment $\left[i,n\right]$ the tour $z$ completely consists of the edges of $x$.
		We rename the tour $y$ as $x$, find the unique edge $(\tilde {i}, \tilde {k})$ of it, appearing in $z$ with the largest number $\tilde {k}$, and repeat the argument.
		Note that by construction $\tilde {k} < i$, and hence there can be only a finite number of such operations. At some step we get a contradiction.
		
		Thus, for the case $y^{c}_{k} = 1$ the first sufficient condition is satisfied.
		
		\item Let $y^{c}_{k} = 0$. Then on the fragment $\left[q,n\right]$ the tour $z$ inverts the increasing and decreasing orders of the tour $y$:
		\begin{equation} \label {qn_y_inv}
		\forall j \geq q:\ z^{c}_{j} = 1 - y^{c}_{j}.
		\end{equation}
		Again, by combining (\ref {ik_x}) and (\ref {qn_y_inv}) with the inequality $q < k$, we obtain a block of the required form (\ref {second_sufficient_inequality}):
		\begin {gather*}
		x^{c}_{k} = z^{c}_{k} = y^{c}_{k-1} = 1,\\
		x^{c}_{k-1} = z^{c}_{k-1} = y^{c}_{k} = 0.
		\end {gather*}
		
		It remains to verify that the condition (\ref {second_sufficient_equality}) is satisfied.
		We suppose that for any $j$ ($j > k$): $x^{c}_{j} = 1 - y^{c}_{j}$.
		Again, on the fragment $\left[i,n\right]$ the tour $z$ consists of the edges of $x$.
		We rename the tours $x$ and $y$, repeating the arguments in the previous step. After a finite number of such operations we obtain a contradiction.
		
		Therefore, for the case $y^{c}_{k} = 0$ the second sufficient condition is satisfied.
		\end {enumerate}
		
		Thus, the combination of two sufficient conditions is a necessary condition. Theorem \ref{theorem_adjacency} is proved.
	\end{proof}

	Theorem \ref{theorem_adjacency} provides an efficient criterion for verifying the adjacency of the vertices of the pyramidal tours polytope (Algorithm \ref{adjacency_alg}). This fact fundamentally distinguishes it from the general traveling salesperson polytope $\TSP (n)$ for which the similar problem is NP-complete (Proposition \ref{TSP_adjacency_NPC}).
	
	\begin{algorithm}[h]
		\caption{($\PYR(n)$ vertices adjacency test)}\label{adjacency_alg}
		\begin{algorithmic}[1]
			\Procedure{Adjacency}{$x,y$}
			
			\State $i\gets 3$ \Comment{Verifying first sufficient condition}
			
			\Repeat
			\State $i\gets i+1$
			\Until{$x^{c}_{i} \neq y^{c}_{i}$}
			
			\State $k\gets i+1$
			\Repeat
			\State $k\gets k+1$
			\Until{$x^{c}_{k} = y^{c}_{k} \neq x^{c}_{k+1} = y^{c}_{k+1}$} \Comment {Searching for a block (\ref {first_sufficient_equality})}
			
			\For{$j\gets k+2, n-1$}
			\If{$x^{c}_{j} \neq y^{c}_{j}$} \Comment{First sufficient condition is satisfied}
			\State \Return Vertices $x^{v}$ and $y^{v}$ are nonadjacent
			\EndIf
			\EndFor
			
			\State $k\gets 3$ \Comment{Verifying second sufficient condition}
			\Repeat
			\State $k\gets k+1$
			\Until{$x^{c}_{k} = y^{c}_{k+1} \neq x^{c}_{k+1} = y^{c}_{k}$} \Comment {Searching for a block (\ref {second_sufficient_inequality})}
			
			\For{$j\gets k+2, n-1$}
			\If{$x^{c}_{j} = y^{c}_{j}$} \Comment{Second sufficient condition is satisfied}
			\State \Return  Vertices $x^{v}$ and $y^{v}$ are nonadjacent
			\EndIf
			\EndFor \Comment{A necessary condition for nonadjacency is not satisfied}
			
			\State\Return Vertices $x^{v}$ and $y^{v}$ are adjacent
			\EndProcedure
		\end{algorithmic}
	\end{algorithm}

	\begin {theorem} \label {linear_adjacency_check}
	The question whether two vertices of $\PYR(n)$ are adjacent can be verified in linear time $O(n)$.
	\end {theorem}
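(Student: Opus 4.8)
The plan is to establish both the correctness and the $O(n)$ running-time bound for Algorithm \ref{adjacency_alg}, whose output decides adjacency by testing the two sufficient conditions of Theorem \ref{theorem_adjacency}. Since, by that theorem, the vertices $x^{v}$ and $y^{v}$ are nonadjacent precisely when one of the two conditions holds, it suffices to show that the algorithm correctly detects each condition and that it does so in linear time.

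First I would observe that the encoding vectors $x^{c}, y^{c} \in \R^{n-3}$ have length $n-3$, so every index under consideration lies in $\{3, \ldots, n-1\}$ and each elementary operation (a single comparison of $x^{c}_{i}$ against $y^{c}_{i}$, or an index increment) costs $O(1)$. The algorithm consists of a constant number of consecutive scans over this index range: locating the first coordinate where $x^{c}$ and $y^{c}$ disagree; searching for a block of the form (\ref{first_sufficient_equality}); scanning for a later disagreement (\ref{first_sufficient_inequality}); and then, for the second condition, searching for a block of the form (\ref{second_sufficient_inequality}) followed by a scan for a later agreement (\ref{second_sufficient_equality}). Each scan advances its index monotonically and terminates no later than index $n-1$, where an implicit boundary check halts a search that finds no block, so each scan performs at most $n-3$ iterations. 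With a constant number of such scans, the total cost is $O(n)$.

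The more delicate point is to justify that these greedy single passes faithfully test the \emph{existential} conditions of Theorem \ref{theorem_adjacency}, rather than forcing the algorithm to examine every candidate block. For the first condition I would argue as follows. Starting the block search just past the first disagreement index $i$ guarantees that any block $k$ found satisfies $i < k$, which supplies the required left disagreement of (\ref{first_sufficient_inequality}); conversely, no block lying before the first disagreement can satisfy the condition, since it would have no disagreement to its left. It then remains to show that inspecting only the earliest such block $k$ is enough: if the suffix $[k+2, n-1]$ contains no disagreement, then $x^{c}_{j} = y^{c}_{j}$ for all $j > k+1$, and since every other admissible block lies to the right of $k$, none of them can possess a right disagreement either, so the first condition genuinely fails. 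The same monotonicity argument handles the second condition: choosing the smallest $k$ satisfying (\ref{second_sufficient_inequality}) maximizes the suffix in which a later agreement (\ref{second_sufficient_equality}) may appear, so that if this suffix contains no agreement, then no admissible block does.

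Combining the correctness of the two greedy tests with the per-scan bound yields that Algorithm \ref{adjacency_alg} decides adjacency after a constant number of linear passes, which establishes the $O(n)$ bound. I expect the main obstacle to be the monotonicity argument of the previous paragraph, namely verifying that the earliest block is the best candidate and that skipping the intermediate blocks loses nothing; the time bound itself is then an immediate consequence once the single-pass structure has been justified.
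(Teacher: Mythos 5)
Your proposal is correct and follows essentially the same route as the paper: the paper's entire proof is the one-line observation that Algorithm \ref{adjacency_alg} makes at most a double pass over the coordinates of $x^{c}$ and $y^{c}$, which is exactly your constant-number-of-linear-scans bound. Your additional monotonicity argument (that testing only the earliest block of the form (\ref{first_sufficient_equality}) after the first disagreement, respectively the earliest block of the form (\ref{second_sufficient_inequality}), suffices because later blocks have smaller suffixes) is sound and in fact fills in the correctness of the greedy scans, which the paper leaves implicit.
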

	
	\begin{proof}
		Note that Algorithm \ref{adjacency_alg} in the worst case requires a double passing along the coordinates of the vectors $x^{c}$ and $y^{c}$ to verify two sufficient conditions for nonadjacency.
		Theorem \ref {linear_adjacency_check} is proved.
	\end{proof}

	\section{Diameter and clique number}
	\indent
	
	On the basis of the nonadjacency criterion of Theorem \ref{theorem_adjacency} we examine the diameter and the clique number of the skeleton of the pyramidal tours polytope.
	
	\begin{theorem}\label {diameter_theorem}
		The diameter of $\PYR (n)$ skeleton equals 2 for all $n \geq 6$.
	\end{theorem}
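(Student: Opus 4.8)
The plan is to establish the two bounds $d(\PYR(n)) \ge 2$ and $d(\PYR(n)) \le 2$ separately, working throughout with the binary encoding $x^{c} \in \{0,1\}^{n-3}$ and invoking the non-adjacency criterion of Theorem \ref{theorem_adjacency}. Recall that $x^{v}$ and $y^{v}$ are adjacent if and only if \emph{neither} of the two sufficient conditions of that theorem is satisfied.

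For the upper bound the idea is to produce a single vertex adjacent to every other vertex of $\PYR(n)$; any two vertices are then at distance at most $2$ through this common neighbour. I would take the constant tour $z$ with $z^{c} = \langle 1,1,\ldots,1 \rangle$, that is $z = \langle 1,2,3,\ldots,n,1 \rangle$. The key observation is that, reading $z$ as the second argument of Theorem \ref{theorem_adjacency}, both sufficient conditions force a descent of the reference tour at the jump position: condition (\ref{first_sufficient_equality}) yields $z^{c}_{k} \ne z^{c}_{k+1}$, and condition (\ref{second_sufficient_inequality}) likewise yields $z^{c}_{k} \ne z^{c}_{k+1}$. Since $z^{c}$ is constant, no such index $k$ exists, so for every vertex $x^{v}$ neither sufficient condition can hold for the pair $(x,z)$. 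By Theorem \ref{theorem_adjacency} the vertex $z^{v}$ is adjacent to all other vertices, whence $d(\PYR(n)) \le 2$.

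For the lower bound I would exhibit an explicit pair of non-adjacent vertices; this simultaneously shows why $n \ge 6$ is the right threshold. Take $x^{c} = \langle 0,1,0,\ldots,0 \rangle$ and $y^{c} = \langle 1,0,0,\ldots,0 \rangle$, agreeing everywhere except for a swap at positions $3$ and $4$. Then the second sufficient condition applies with $k = 3$: the swap gives $x^{c}_{3} = y^{c}_{4} \ne x^{c}_{4} = y^{c}_{3}$, the index constraint $3 \le k < n-2$ holds precisely when $n \ge 6$, and the agreement $x^{c}_{5} = y^{c}_{5} = 0$ with $5 > k+1$ supplies the required index $j$ (position $5$ exists since $n-1 \ge 5$). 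Hence $x^{v}$ and $y^{v}$ are non-adjacent and $d(\PYR(n)) \ge 2$. Combining the two bounds gives the claim.

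The argument is short once the universal vertex is spotted, so I do not anticipate a serious obstacle; the only points needing care are (i) verifying that both sufficient conditions genuinely require an ascent or descent of the reference tour at position $k$, which is exactly what makes every constant tour a dominating vertex of the skeleton, and (ii) respecting the index ranges in Theorem \ref{theorem_adjacency}, which is where the hypothesis $n \ge 6$ enters. For $n = 5$ neither sufficient condition can be satisfied by any pair, the skeleton is a complete graph, and the diameter drops to $1$, so the stated threshold is tight.
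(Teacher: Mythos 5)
Your proof is correct and follows essentially the same route as the paper: exhibit a pair of vertices that are non-adjacent by the second sufficient condition of Theorem \ref{theorem_adjacency} (the paper uses the same swap-at-positions-$3,4$ example $\langle 1,0,0\rangle$, $\langle 0,1,0\rangle$ for $n=6$), and observe that a constant-code vertex is adjacent to all others, giving distance at most $2$ through it. The only difference is cosmetic: the paper cites both all-ones and all-zeros codes as universal vertices and leaves the verification implicit, whereas you spell out why both sufficient conditions force $z^{c}_{k}\neq z^{c}_{k+1}$ and hence fail for a constant code --- a detail worth making explicit, as you did.
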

	
	\begin{proof}
		First we note that for $n \leq 5$ the necessary condition of Theorem \ref{theorem_adjacency} is not satisfied, and all vertices of the skeleton are pairwise adjacent.
		Starting with $n = 6$, polytope $\PYR (n)$ has pairs of vertices that satisfy at least one of the sufficient nonadjacency conditions. For example, vertices $\left\langle 1,0,0 \right\rangle$ and $\left\langle 0,1,0 \right\rangle$ are not adjacent. 
		
		It remains to note that by Theorem \ref {theorem_adjacency} two vertices with the codes 
		$$\left\langle 1,1,1, \ldots, 1 \right\rangle \mbox { and } \left\langle 0,0,0, \ldots, 0 \right\rangle$$ 
		are adjacent to all vertices of the pyramidal tours polytope.
		Theorem \ref{diameter_theorem} is proved.
	\end{proof}
	
	Therefore, if we confine ourselves to considering only pyramidal tours for the traveling salesperson problem, the conjecture of Gr\"{o}tschel and Padberg on the diameter of the skeleton \cite{Grotschel} is correct. 
	
	\begin{theorem}\label{clique_theorem}
		The clique number of $\PYR(n)$ skeleton is quadratic in the parameter $n$:
		\begin{equation} \label{quadratic_clique}
		\omega (\PYR(n)) = \Theta (n^{2}).
		\end{equation}
	\end{theorem}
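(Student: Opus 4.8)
The plan is to prove the two matching bounds $\omega(\PYR(n)) = \Omega(n^2)$ and $\omega(\PYR(n)) = O(n^2)$ separately, in both cases working entirely with the code vectors $x^{c} \in \{0,1\}^{n-3}$ and the adjacency criterion of Theorem \ref{theorem_adjacency}.

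For the lower bound I would exhibit an explicit clique of quadratic size. Fix a central coordinate $c = \lfloor n/2 \rfloor$, and let $\mathcal{F}$ be the set of all codes whose set of $1$'s is an index interval $[l,r] = \{l, l+1, \ldots, r\}$ with $3 \leq l \leq c \leq r \leq n-1$; every such code has the shape $0\cdots0\,1\cdots1\,0\cdots0$. Since $l$ ranges over $c-2$ values and $r$ over $n-c$ values, $|\mathcal{F}| = (c-2)(n-c) = \Theta(n^2)$. I would then check that any two members $x,y \in \mathcal{F}$ are adjacent by showing that neither sufficient condition of Theorem \ref{theorem_adjacency} can hold. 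Because both $1$-intervals contain $c$ they overlap, so the coordinates on which $x^{c}$ and $y^{c}$ disagree split into a block where $x^{c}<y^{c}$ and a block where $x^{c}>y^{c}$ that are separated by the common overlap block of $1$'s; hence no two disagreeing coordinates of opposite type are consecutive, the crossing pattern (\ref{second_sufficient_inequality}) never occurs, and condition~2 fails. For condition~1, an agreement transition (\ref{first_sufficient_equality}) between two equal-valued coordinates can arise only where the two intervals share an endpoint, and in that case all disagreements lie strictly on one side, so (\ref{first_sufficient_inequality}) fails. Thus $\mathcal{F}$ is a clique and $\omega(\PYR(n)) = \Omega(n^2)$.

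For the upper bound I would order the codes by the coordinatewise partial order on $\{0,1\}^{n-3}$. Every chain in this order has at most $n-2$ elements, so by Dilworth's theorem any clique $S$ is covered by $w(S)$ chains, where $w(S)$ is the size of a largest antichain contained in $S$; consequently $|S| \leq (n-2)\,w(S)$. It therefore suffices to prove that an antichain inside a clique has only $O(n)$ elements. An antichain in a clique is a set of codes that are pairwise incomparable yet pairwise adjacent; I would argue that such codes behave like pairwise crossing intervals, so that a suitable monotone statistic (for instance the position of the first coordinate at which the code disagrees with the rest) is strictly increasing along the antichain and hence takes at most $n-3$ distinct values. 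Combining the two estimates yields $|S| = O(n^2)$.

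The main obstacle is the antichain bound $w(S) = O(n)$: unlike the clean interval family used for the lower bound, a clique may contain codes with many alternations, and ruling out a super-linear antichain of pairwise-incomparable, pairwise-adjacent codes requires extracting global monotonicity from the purely local conditions (\ref{first_sufficient_equality})--(\ref{second_sufficient_equality}). The delicate point is that incomparability forces, for every pair, both a coordinate with $x^{c}<y^{c}$ and one with $x^{c}>y^{c}$, while adjacency (the failure of condition~2) forbids such opposite disagreements from being consecutive whenever an agreement follows them; I expect the crux to be showing that these constraints, imposed simultaneously on all pairs of the antichain, force the first-disagreement positions to be distinct and monotone, thereby bounding the antichain by the number of coordinates.
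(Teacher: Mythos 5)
Your lower bound is correct and is genuinely different from the paper's construction. The paper builds a clique of size $\left(\lfloor (n-3)/4\rfloor+1\right)^{2}$ from codes consisting of $\left\langle 1,0\right\rangle$-blocks accumulating from the left and $\left\langle 0,1\right\rangle$-blocks accumulating from the right of a fixed middle coordinate (a two-parameter grid family), whereas you take all interval codes $[l,r]$ through a fixed center $c$. Your verification goes through: two intervals through $c$ can never produce the crossing pattern (\ref{second_sufficient_inequality}), since that would force $l_2 = r_1+1$ while $l_2 \leq c \leq r_1$; and a common transition block (\ref{first_sufficient_equality}) occurs only when the intervals share an endpoint, in which case all disagreements lie on one side and (\ref{first_sufficient_inequality}) fails. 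Your family even has the larger size $(c-2)(n-c) \approx n^{2}/4$ versus the paper's $\approx n^{2}/16$, so for the $\Omega(n^2)$ half your route is a clean improvement in the constant.

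The upper bound, however, has a genuine gap, and it is exactly where you flag it. The Dilworth reduction $|S| \leq (n-2)\,w(S)$ is fine but only relocates the entire difficulty into the unproven lemma $w(S) = O(n)$, and your sketched mechanism for it does not work as stated: the statistic ``position of the first coordinate at which the code disagrees with the rest'' is not injective even on the natural antichain inside your own lower-bound clique. Take the pairwise non-nested intervals $[l_i, r_i]$ with $l_1 < l_2 < \cdots$ and $r_1 < r_2 < \cdots$, all through $c$: this is an antichain of size $\Theta(n)$ inside a clique, yet every member disagrees with some other member already at coordinate $l_1$, so the first-disagreement positions all coincide rather than being ``distinct and monotone.'' More importantly, it is not evident that the lemma itself is true without essentially redoing the paper's structural analysis: ruling out a superlinear antichain of pairwise-adjacent codes requires knowing how shared transition blocks constrain whole code fragments, which is precisely what the paper proves. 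Its argument removes at most $2(n-4)$ ``unique'' tours; shows that if two tours of the clique share a transition block then (by the failure of condition (\ref{first_sufficient_equality})--(\ref{first_sufficient_inequality})) they must agree on an entire side of it, with the side consistent across all tours sharing the block; shows that within one tour all ``left-agreeing'' blocks precede all ``right-agreeing'' blocks; and concludes that each remaining tour is determined by a triple $(k,s,\text{bit})$, giving at most $2(n-5)(n-6)$ tours. Note that this counts tours directly by a two-coordinate invariant rather than via a chain/antichain decomposition, so the paper never needs an antichain bound at all. To complete your proof you would have to develop the one-sidedness and consistency machinery anyway, at which point the Dilworth step becomes superfluous.
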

	
	\begin{proof}
		We estimate the clique number of the skeleton from above. Let $Y_{v}$ be a set of pairwise adjacent vertices of $\PYR(n)$, and $Y$ be the set of corresponding pyramidal tours.
		
		We choose $k$ $(3 \leq k \leq n-2)$.
		Pyramidal tour $y \in Y$ is called unique with respect to $k$, if
		\begin {itemize}
		\item $y^{c}_{k} \neq y^{c}_{k+1}$;
		\item for all $z \in Y\backslash {y}$: $z^{c}_{k} \neq y^{c}_{k}$ or $z^{c}_{k+1} \neq y^{c}_{k+1}$.
		\end {itemize}
		Thus, the block of code in $y^{c}$ on the coordinates $\left[k, k + 1\right]$ has the form $\left\langle 1,0 \right\rangle$ or $\left\langle 0,1 \right\rangle$, and there is no such block on these coordinates in any tour of $Y$.
		
		We construct the set $W$, excluding from $Y$ all unique pyramidal tours.
		Note that the number of excluded pyramidal tours does not exceed $2(n-4)$.
		We consider a tour $x \in W$. Suppose that for some $k$ $(3 < k < n-2)$: $x^{c}_{k} \neq x^{c}_{k+1}$. 
		By construction of the set $W$, there is a tour $y \in W$ such that $x^{c}_{k} = y^{c}_{k}$ and $x^{c}_{k+1} = y^{c}_{k+1}$.
		Since the vertices $x^{v}$ and $y^{v}$ of the polytope $\PYR(n)$ are adjacent, by Theorem \ref{theorem_adjacency} either their left fragments of the code with respect to $k$ coincide
		$$\forall i \ (3 \leq i < k):\ x^{c}_{i} = y^{c}_{i},$$
		or the right fragments of the code coincide
		$$\forall j \ (k+1 < j \leq n-1):\ x^{c}_{j} = y^{c}_{j}.$$
		Otherwise, the vertices $x^{v}$ and $y^{v}$ are not adjacent by the first sufficient condition.
		
		Note that for any tours with a common block, the coincident fragments are on the same side of the common block.
		Indeed, suppose that three tours $x,y,z \in W$ have a common block of the form $\left\langle 1,0 \right\rangle$ on the position $\left[k, k + 1\right]$, and at the same time
		\begin{gather*}
		\forall i \ (3 \leq i < k):\ x^{c}_{i} = y^{c}_{i},\\
		\forall j \ (k+1 < j \leq n-1):\ x^{c}_{j} = z^{c}_{j}.
		\end{gather*}
		Then, since the vertices $y^{v}$ and $z^{v}$ are adjacent, either their left fragments of the code with respect to the block $\left[k,k + 1\right]$ coincide (in this case $x = z$), or the right fragments of the code coincide ($x = y$). We have a contradiction.
		
		Thus, for each pyramidal tour from $W$, all the blocks of the form $\left\langle 1,0 \right\rangle$ and $\left\langle 0,1 \right\rangle$ can be divided into two classes:  those where the right fragments coincide, and those where the left fragments coincide.
		With each pyramidal tour $x \in W$ we associate a vector $x^{\rightarrow}$ by the following rule:
		\begin{gather*}
		x^{\rightarrow}_{k} =
		\begin{cases}
		(\rightarrow), &\mbox {if } x^{c}_{k} \neq x^{c}_{k+1}, \mbox { and with respect to the block } \left[ k,k+1 \right]\\
		&\mbox {the right fragments coincide,}\\
		(\leftarrow), &\mbox {if } x^{c}_{k} \neq x^{c}_{k+1}, \mbox { and with respect to the block } \left[ k,k+1 \right]\\
		&\mbox {the left fragments coincide,}\\
		(-), &\mbox {if } x^{c}_{k} = x^{c}_{k+1}.
		\end{cases}
		\end{gather*}
		
		We note that coinciding fragments cannot overlap on one tour $x \in W$. Indeed, if for some $k,s$: $x^{\rightarrow}_{k} = (\leftarrow)$, and $x^{\rightarrow}_{s} = (\rightarrow)$, then $k < s$.
		Suppose the contrary. We consider a tour $y \in W$ such that
		\begin {gather*}
		y^{c}_{k} = x^{c}_{k},\ y^{c}_{k+1} = x^{c}_{k+1},\\
		\forall i \ (3 \leq i < k):\ x^{c}_{i} = y^{c}_{i}.
		\end {gather*}
		But, by assumption, $s \leq k$, and so the blocks $\left[s, s + 1 \right]$ of the tours $x$ and $y$ also coincide:
		\begin {gather*}
		y^{c}_{s} = x^{c}_{s},\ y^{c}_{s+1} = x^{c}_{s+1},\\
		\forall j \ (s+1 < j \leq n-1):\ x^{c}_{j} = y^{c}_{j}.
		\end {gather*}
		The tours $x$ and $y$ are equal to each other, a contradiction.
		
		We consider a tour $x \in W$ and choose the largest value $k$ for which $x^{\rightarrow}_{k} = (\leftarrow)$ and the smallest value $s$ for which $x^{\rightarrow}_{s} = (\rightarrow)$.
		If the tour $x$ does not contain blocks $(\leftarrow)$ or $(\rightarrow)$, we denote the corresponding element by the symbol $\emptyset$.
		Note that the values of the coordinates $x^{c}_{k+1}$ and $x^{c}_{s}$ coincide.
		Otherwise, there are blocks of the form $\left\langle 1,0 \right\rangle$ or $\left\langle 0,1 \right\rangle$ between $k + 1$ and $s$.
		
		Thus, with each tour $x \in W$ we can associate a triple $\left(k,s,x^{c}_{k + 1} = x^{c}_{s}\right)$ that uniquely defines $x$ among tours of $W$.
		Since $k,s \in \{4,5,\ldots, n-3,\emptyset\}$, the total number of triples $\left (k,s,0/1 \right)$ does not exceed
		$$|W| \leq 2 (n-5)(n-6).$$
		Taking into account the previously excluded unique tours, we obtain the desired upper bound:
		$$\omega (\PYR(n))  = O (n^{2}).$$
		
		Now we estimate the clique number of the skeleton of the polytope $\PYR(n)$ from below.
		Let
		$$m = \left\lfloor \frac {n-3}{4}\right\rfloor.$$
		We consider a set of pyramidal tours $Z$.
		With each pair $q,s$, where $0 \leq q,s \leq m$, we associate a pyramidal tour $x \in Z$ according to the following rules:
		\begin {itemize}
		\item $\forall i$ $(1 \leq i \leq q)$: $x^{c}_{2i+1} = 1$, $x^{c}_{2i+2} = 0$;
		\item $\forall j$ $(1 \leq j \leq s)$: $x^{c}_{4m-2j+3} = 0$, $x^{c}_{4m-2j+4} = 1$;
		\item $\forall k \geq 4m + 3$: $x^{c}_{k} = 1$;
		\item all the remaining coordinates of $x^{c}$ are equal to zero.
		\end {itemize}
		The total number of such tours is $(m + 1)^{2}$. Here is an example of the set $Z$ for $n= 12$ ($m = 2$):
		\begin {gather*}
		(0,0) = \left\langle 0,0, 0,0, 0,0, 0,0, 1 \right\rangle,\\
		(0,1) = \left\langle 0,0, 0,0, 0,0, 0,1, 1 \right\rangle,\\
		(0,2) = \left\langle 0,0, 0,0, 0,1, 0,1, 1 \right\rangle,\\ 
		(1,0) = \left\langle 1,0, 0,0, 0,0, 0,0, 1 \right\rangle,\\ 
		(1,1) = \left\langle 1,0, 0,0, 0,0, 0,1, 1 \right\rangle,\\
		(1,2) = \left\langle 1,0, 0,0, 0,1, 0,1, 1 \right\rangle,\\
		(2,0) = \left\langle 1,0, 1,0, 0,0, 0,0, 1 \right\rangle,\\ 
		(2,1) = \left\langle 1,0, 1,0, 0,0, 0,1, 1 \right\rangle,\\ 
		(2,2) = \left\langle 1,0, 1,0, 0,1, 0,1, 1 \right\rangle.
		\end {gather*}
		
		It remains to verify that, by construction, if for some pair of tours $x,y \in Z$:
		$x^{c}_{k} = y^{c}_{k} \neq x^{c}_{k+1} = y^{c}_{k+1}$, then
		$$\forall i \ (3 \leq i < k):\ x^{c}_{i} = y^{c}_{i},$$
		if $k \leq 2m+2$, and
		$$\forall j \ (k+1 < j \leq n-1):\ x^{c}_{j} = y^{c}_{j},$$
		if $k > 2m+2$.
		Consequently, the first sufficient condition of nonadjacency is not satisfied.
		And there is no pair of tours $x,y \in Z$ such that $x^{c}_{k} = y^{c}_{k + 1} \neq x^{c}_{k + 1} = y^{c}_{k}$.
		Thus, the second sufficient condition of nonadjacency is also not satisfied, and all vertices of the polytope $\PYR (n)$ that correspond to the pyramidal tours of the set $Z$ are pairwise adjacent.
		We obtain the lower bound
		$$\omega (\PYR(n)) \geq \left( \left\lfloor \frac {n-3}{4}\right\rfloor + 1\right)^2,$$
		and the quadratic asymptotically exact estimate (\ref{quadratic_clique}) from the statement of the theorem.
		Theorem \ref{clique_theorem} is proved.
	\end{proof}
	
	Thus, the clique number of $\PYR (n)$ skeleton differs in principle from the exponential clique number of the skeleton of the general traveling salesperson polytope $\TSP (n)$ (Theorem \ref{theorem_TSP_clique}).
	We recall that the clique number of the skeleton serves as the lower bound for the computational complexity in the class of direct-type algorithms \cite{Bondarenko-Maksimenko}.
	It should also be noted that the value $\Theta (n^{2})$ of clique number correlates with the time complexity $O(n^2)$ of dynamic programming for the pyramidal traveling salesperson problem in a complete graph $K_{n}$ \cite {Klyaus, Gilmore}.
	
	\section{Conclusion}
	\indent
	
	The results presented in the paper, along with those obtained earlier for other combinatorial problems, indicate the existence of a connection between the characteristics of a skeleton of the polytope and the complexity of the corresponding problem.
	So, for polynomially solvable problems like the minimum cut, spanning tree, shortest path and a number of others, the skeletons are completely described and have polynomial clique numbers \cite{Belov, Bondarenko-Maksimenko, Bondarenko-Nikolaev-2016}.
	While for NP-hard problems like the maximum cut, spanning tree with the constraints on the number of leaves and the degree of vertices, the longest path and many others, exponential lower bounds on the clique numbers of the skeletons of the associated polytopes are established \cite{Bondarenko-Maksimenko, Bondarenko-Nikolaev-2016, Bondarenko-NS}.
	And for some problems, such as the traveling salesperson and the knapsack, even the vertex adjacency test is an NP-complete problem \cite {Geist, Papadimitriou}.
	
	Thus, the pyramidal tours polytope considered in this paper is much closer in its polyhedral properties to polytopes of other polynomially solvable problems, such as the spanning tree and the shortest path, and differs from the polytope of the general traveling salesperson problem.

	

\end{document}